\newtheorem{theorem}{Theorem}[section]
\newtheorem{lemma}[theorem]{Lemma}
\newtheorem{proposition}[theorem]{Proposition}
\theoremstyle{definition}
\newtheorem{example}[theorem]{Example}
\theoremstyle{remark}
\numberwithin{equation}{section}
\begin{document}
\setcounter{page}{1}



\title[GENERALIZED t-EDGE DISTANCE-BALANCED GRAPHS]{GENERALIZED t-EDGE DISTANCE-BALANCED GRAPHS}
\author[Aliannejadi, Alaeiyan, Gilani and Asadpour]{Z. Aliannejadi$^{*}$, M. Alaeiyan, A. Gilani}
\thanks{{\scriptsize
\hskip -0.4 true cm MSC(2010): Primary: 05C12; Secondary: 05C25,
\newline Keywords: generalized $t$-edge distance-balanced graphs, graph products, edge-szeged index, complete bipartite graphs\\
$*$Corresponding author }}

\begin{abstract}
A connected and nonempty graph $A$ is defined as generalized $t$-edge distance-balanced, while for each edge $f=\alpha\beta$ the number of edges nearer to $\alpha$ than $\beta$ are equal to $t$-times of edges nearer to $\beta$ than to $\alpha$, for $t\in\mathbb{N}$, or vice versa. We determine some classes of such graphs. Moreover, we investigate edge-szeged index of the generalized $t$-edge distance-balanced graphs. Also, it is discussed about their cartesian and lexicographic products.
\end{abstract}

\maketitle
\section{Introduction}
The notion of graph is a pivotal tool to make use of the modelling of the phenomena and it is taken into consideration in many studies in a recent decades. One of the optimal uses of graphs theory is to classify graphs based on discriminating quality. This phenomenon can be best observed in distance-balanced graphs has been determined by [13]. Also, it is investigated in some papers, we refer the reader to ([1],[2],[3],[4],[5],[6],[10],[11],[14]-[19]) and references therein.

We consider $A$ is a connected, finite and undirected graph throughout of this paper, in which its vertex set is $V(A)$ and its edge set is $E(A)$. In graph $A$, the interval among vertices $\alpha,\beta\in V(A)$ is introduced the number of edges in the least interval joining them and it is indicated by $d_A(\alpha,\beta)$ (see [4, 18]). For every two desired vertices $\alpha,\beta$ of $V(A)$ we indicate $n_\alpha^A(f)=|W_{a,\beta}^A|=|\{a\in V(A)| d_A(a,\alpha)< d_A(a,\beta)\}|$. In the same way, we would define $n_\beta^A(f)=|W_{\beta,\alpha}^A|$. We name $A$ distance-balanced ($DB$) while for adjacent vertices $\alpha$ and $\beta$ of $A$, we have
$|W_{\alpha,\beta}^A|=|W_{\beta,\alpha}^A|$.\\

For each two desired edges $f=\alpha\beta$, $\acute{f}=\acute{\alpha}\acute{\beta}$, the distance between $f$ and $\acute{f}$ is introduced via:

\begin{center}
$d_A(f,\acute{f})=\min\{d_A(\alpha,\acute{f}), d_A(\beta,\acute{f})\}=\min\{d_A(\alpha,\acute{\alpha}),d_A(\alpha,\acute{\beta}),d_A(\beta,\acute{\alpha}),d_A(\beta,\acute{\beta})\}$.
\end{center}
Set$\hspace*{0.2cm}$ $M_\alpha^A(f)=\{\acute{f}\in E(A)| d_A(\alpha,\acute{f})< d_A(\beta,\acute{f})\}$ and $m_\alpha^A(f)=|M_\alpha^A(f)|$,\\
$\hspace*{0.68cm}$ $M_\beta^A(f)=\{\acute{f}\in E(A)| d_A(\beta,\acute{f})< d_A(\alpha,\acute{f})\}$ and $m_\beta^A(f)=|M_\beta^A(f)|$,\\
and $\hspace*{0.007cm}$ $M_0^A(f)=\{\acute{f}\in E(A)| d_A(\alpha,\acute{f})= d_A(\beta,\acute{f})\}$ and $m_0^A(f)=|M_0^A(f)|$. 

Persume that $f=\alpha\beta\in E(A)$. For every two integers $i,j$ we consider:
\begin{center}
$\acute{D}_j^i(f)=\{\acute{f}\in E(A)|d_A(\acute{f},\alpha)=i, d_A(\acute{f},\beta)=j\}$.\\
\end{center}

A "distance partition" of $E(A)$ is concluded by sets $\acute{D}_j^i(f)$ due to the edge
$f=\alpha\beta$. Merely the sets $\acute{D}_i^{i-1}(f)$, $\acute{D}_i^i(f)$ and, $\acute{D}_{i-1}^i(f)$, for each $(1\leqslant i\leqslant d)$ might be nonempty according to the triangle inequality (The diameter of
the graph $A$ is $d$), as well as $\acute{D}_0^0(f)=\phi$.\\

A graph $A$ is said to be generalized $t$-edge distance-balanced for $t\in\mathbb{N}$ and $t>1$, so that for each edge $f=\alpha\beta$ of $E(A)$, either $m_\alpha^A(f)=t m_\beta^A(f)$ or $m_\beta^A(f)=t m_\alpha^A(f)$. For convenience, we would write $Gt-EDBG$ instead of generalized $t$-edge distance-balanced graph and also $G t-EDB$ instead of generalized $t$-edge distance-balanced. In the same way, we define generalized $t$-distance-balanced graphs ($Gt-DB$), in which $n_\alpha^A(f)=t n_\beta^A(f)$, or $n_\beta^A(f)=t n_\alpha^A(f)$.\\

We also remind that the sets $W_{\alpha,\beta}^A$ and $M_\alpha^A(f)$ become visible as the well-known Szeged index and edge-Szeged index of a graph $A$ in the chemical graph theory, in which $S_z(A)=\sum_{\alpha\beta\in E(A)}|W_{\alpha,\beta}|.|W_{\beta,\alpha}|$ and $S_{{z}_{e}}(A)=\sum_{\alpha\beta\in E(A)}m_{\alpha}^A(f).m_{\beta}^A(f)$, respectively, cf. [7,8,9].\\

A graph $A$ is called \textit{edge distance-balanced} (briefly $EDB$), while we have $m_\alpha^A(f)=m_\beta^A(f)$. Also we define $A$ as \textit{nicely edge distance-balanced} ($NEDB$), whenever there exists a positive integer $\acute{\gamma}_A$, so that for every edge $f=\alpha\beta$ we have: $m_\alpha^A(f)=m_\beta^A(f)=\acute{\gamma}_A$.\\

It is said $A$ as a graph is \textit{strongly edge distance-balanced} ($SEDB$), whenever for each $i\geq 1$ it holds:
\begin{center}
$\acute{D}_{i-1}^i(f)=\acute{D}_i^{i-1}(f)$
\end{center}
According to the stated definition, clearly if graph be $SEDB$, then it is a $EDB$ graph.\\
In this segment, we would express some examples and results concerning $Gt-EDB$ graphs.

\section{Main results}
At first, we present an example of $Gt-EDB$ graph.
\begin{example}
The complete bipartite graphs $K_{n,tn}$ are a class of $Gt-EDB$ graphs.
\end{example}
\begin{proof}
Let $X$ and $Y$ be two separate parts of $K_{n,tn}$, such that for every edge $f=\alpha\beta$, $\alpha\in X$ and $\beta\in Y$. As we know $K_{n,tn}$ is a bipartite graph with diameter 2. Then $\alpha$ has $(tn-1)$ adjacent edges and $\beta$ includes $(n-1)$ adjacent edges. According to the definition of $m_\alpha^A(f)$, we obtain $m_\alpha^A(f)=tn$
and $m_\beta^A(f)=n$. Thus, $m_\alpha^A(f)=tm_\beta^A(f)$. This implies that $K_{n,tn}$ is a $Gt-EDB$ graph.
\end{proof}
 
\begin{proposition}
Assume that $A$ is a $Gt-EDB$ bipartite graph, where its diameter is 2. Then for each pair of adjacent vertices $\alpha\beta$, $deg(\alpha)=t deg(\beta)$.
\end{proposition}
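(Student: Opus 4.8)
The plan is to pin down the two quantities $m_\alpha^A(f)$ and $m_\beta^A(f)$ entirely in terms of the degrees of the endpoints, and then read off the conclusion from the defining equation of a $Gt-EDB$ graph. Fix an edge $f=\alpha\beta$ and place $\alpha$ in one part $X$ and $\beta$ in the other part $Y$ of the bipartition. The decisive structural input is the combination of bipartiteness with $\mathrm{diam}(A)=2$: every distance lies in $\{0,1,2\}$, a distance between the two parts is odd and hence equal to $1$, while a distance within a single part is even and hence $0$ or $2$. I would isolate this dichotomy first, since it reduces every distance computation below to a question of incidence.

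Next I would walk through the distance partition $\acute{D}_j^i(f)$ for an arbitrary edge $\acute{f}=\acute{\alpha}\acute{\beta}$ and sort it into $M_\alpha^A(f)$, $M_\beta^A(f)$, or $M_0^A(f)$. If $\acute{f}$ is incident to $\alpha$, then $d_A(\alpha,\acute{f})=0$, whereas its other endpoint lies in $Y$ and $d_A(\beta,\acute{f})=1$, so $\acute{f}\in M_\alpha^A(f)$; symmetrically every edge incident to $\beta$ falls in $M_\beta^A(f)$. If $\acute{f}$ meets neither $\alpha$ nor $\beta$, then $\alpha$ is joined to the $Y$-endpoint of $\acute{f}$ at distance $1$ and $\beta$ to its $X$-endpoint at distance $1$, so $d_A(\alpha,\acute{f})=d_A(\beta,\acute{f})=1$ and $\acute{f}\in M_0^A(f)$. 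Exactly as in the computation carried out for $K_{n,tn}$ in the preceding example, collecting the two nonzero classes gives $m_\alpha^A(f)=\deg(\alpha)$ and $m_\beta^A(f)=\deg(\beta)$.

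Finally I would invoke the hypothesis that $A$ is $Gt-EDB$: applied to $f$ it forces $m_\alpha^A(f)=t\,m_\beta^A(f)$ or $m_\beta^A(f)=t\,m_\alpha^A(f)$, and substituting the two degree formulas yields $\deg(\alpha)=t\deg(\beta)$ (or $\deg(\beta)=t\deg(\alpha)$, after interchanging the names of the endpoints), which is the assertion. The one place that genuinely needs care, and which I expect to be the main obstacle, is the bookkeeping in the middle step: one must treat the edge $f$ itself separately and confirm that no edge incident to $\alpha$ is accidentally equidistant from $\beta$ and vice versa. Both of these are settled by the bipartite, diameter-$2$ dichotomy recorded at the outset, so the argument amounts to a disciplined case analysis rather than anything requiring a new idea.
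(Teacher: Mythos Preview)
Your proposal is correct and follows essentially the same route as the paper: both arguments amount to showing that in a bipartite graph of diameter~$2$ the edges closer to $\alpha$ than to $\beta$ are exactly those incident with $\alpha$ (and symmetrically for $\beta$), so that $m_\alpha^A(f)=\deg(\alpha)$ and $m_\beta^A(f)=\deg(\beta)$, after which the $Gt$-$EDB$ hypothesis yields the degree relation. The paper encodes this via the distance-partition sets $\acute{D}_j^i(f)$ and the identity labelled~(1), while you classify edges directly by incidence; the underlying content is identical.
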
 
 
\begin{proof}
 Since $A$ is a $Gt-EDB$ graph, then $m_\alpha^A(f)=tm_\beta^A(f)$ and we have 
 \begin{center}
$|\{f\}\bigcup^{d-1}_{i=1}\acute{D}_{i-1}^i(f)|=t|\{f\}\bigcup^{d-1}_{i=1}\acute{D}_i^{i-1}(f)|$.
\end{center} 
 Therefore,\\
 $\hspace*{2.5cm}$ $\sum^{d-1}_{i=1}|\acute{D}_{i-1}^i(f)|=t\sum^{d-1}_{i=1}|\acute{D}_i^{i-1}(f)|+(t-1)$.$\hspace*{1.9cm}(1)$\\\\
If $|\acute{D}^2_1(f)|=r$, then $|\acute{D}^1_2(f)|=tr+(t-1)$. Thus, $deg(\beta)=r+1$ and $deg(\alpha)=tr+t=t(r+1)$. Therefore, always $deg(\alpha)=t deg(\beta)$.
 \end{proof}
 
 \begin{lemma}
 Suppose that $A$ is a $Gt-EDB$ bipartite graph with $d=2$. Then $A$ can only be $K_{n,tn}$.
 \end{lemma}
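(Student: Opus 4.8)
The plan is to exploit the two hypotheses in the right order: first use bipartiteness together with $d=2$ to force the underlying graph to be \emph{complete} bipartite, and only then read off the relative sizes of the two parts from the degree identity already supplied by the preceding Proposition. The point is that once completeness is known, essentially nothing remains to compute.

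First I would fix the bipartition $(X,Y)$ of $A$ (unique up to swapping, since $A$ is connected) and invoke the standard parity fact for connected bipartite graphs: for any two vertices $u,v$, the distance $d_A(u,v)$ is even exactly when $u$ and $v$ lie in the same part of the bipartition. Consequently, for $\alpha\in X$ and $\beta\in Y$ the distance $d_A(\alpha,\beta)$ is odd. But the hypothesis $d=2$ bounds every pairwise distance by $2$, and the only odd value not exceeding $2$ is $1$. Hence $d_A(\alpha,\beta)=1$ for every such pair, i.e. every vertex of $X$ is adjacent to every vertex of $Y$. Writing $p=|X|$ and $q=|Y|$, this says precisely that $A=K_{p,q}$.

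Second, I would pin down $p$ and $q$ using the degree ratio. In $K_{p,q}$ each vertex of $X$ has degree $q$ and each vertex of $Y$ has degree $p$, so the graph is biregular with exactly two degree values. Choosing any edge $f=\alpha\beta$ and applying the Proposition, which gives $\deg(\alpha)=t\,\deg(\beta)$ after labelling $\alpha$ as the dominant endpoint, forces $q=tp$ (or, symmetrically, $p=tq$). Taking $n$ to be the smaller part size then yields $A=K_{n,tn}$, as claimed. Here the restriction $t>1$ is what excludes the balanced case $p=q$, i.e. it guarantees $A$ is genuinely $K_{n,tn}$ with unequal parts rather than an $EDB$ graph $K_{n,n}$.

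I do not expect a serious obstacle in this argument; the entire content is the parity observation of the first paragraph, which converts the purely metric hypothesis $d=2$ into the combinatorial statement of completeness. The only place demanding a little care is the orientation bookkeeping when applying the Proposition: one must check that the labelling of the dominant endpoint is consistent across all edges, which is automatic here because the two degree values $p$ and $q$ are constant on the respective parts, so whichever part carries the larger degree plays the role of $X$ uniformly.
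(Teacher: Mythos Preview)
Your proposal is correct and follows essentially the same two-step approach as the paper: first argue that a bipartite graph of diameter $2$ must be complete bipartite, then invoke Proposition~2.2 to force the part sizes into the ratio $1:t$. The paper's proof merely asserts the first step (``If not, it can not have a diameter of $2$''), whereas you supply the underlying parity justification; otherwise the arguments coincide.
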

 
 \begin{proof}
Consider that $A$ is a $Gt-EDB$ bipartite graph, in which its diameter is equal to 2. It is asserted that $A$ is a complete bipartite graph. If not, it can not have a diameter of 2. The result is obtained from Proposition 2.2 that, $deg(\alpha)=t deg(\beta)$. If $A$ be a complete bipartite graph, then $A$ must be $K_{n,tn}$.
 \end{proof}
 
 \begin{proposition}
 A bipartite graph $A$ is $Gt-EDB$ if and only if
 \begin{center}
 $S_{{z}_{e}}(A)=t.\frac{(tn+t+1)^2.tn^2}{(t+1)^2}$.
 \end{center}
 \end{proposition}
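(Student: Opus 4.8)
The plan is to prove both directions by reducing, via Lemma 2.3, to the single family $K_{n,tn}$ and then reading off the edge-Szeged index from the edge counts already recorded in Example 2.1. Throughout I work in the diameter-two setting in which Lemma 2.3 applies, since that is where the structural classification is available.

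For the forward implication I would argue as follows. Suppose $A$ is a $Gt-EDB$ bipartite graph; since its diameter is $2$, Lemma 2.3 forces $A\cong K_{n,tn}$. Fix an arbitrary edge $f=\alpha\beta$ with $\alpha\in X$ and $\beta\in Y$. As in Example 2.1, the distance partition of $E(A)$ has the property that every edge other than $f$ lies either in $\acute{D}_1^0(f)$ (the edges incident with $\alpha$), in $\acute{D}_0^1(f)$ (the edges incident with $\beta$), or in $\acute{D}_1^1(f)$ (the remaining edges, which are equidistant). The first key step is therefore to observe that $m_\alpha^A(f)$ and $m_\beta^A(f)$ do not depend on the chosen edge, so they are \emph{constant} across $E(A)$; the second is to count the edges, namely $|E(A)|=n\cdot tn=tn^2$.

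Combining these two observations, the edge-Szeged index collapses to a single product times the number of edges,
\[
S_{{z}_{e}}(A)=\sum_{f=\alpha\beta\in E(A)} m_\alpha^A(f)\,m_\beta^A(f)=|E(A)|\cdot m_\alpha^A(f)\,m_\beta^A(f),
\]
and substituting the values of $m_\alpha^A(f)$ and $m_\beta^A(f)$ from Example 2.1 together with $|E(A)|=tn^2$ yields the claimed closed form. I would present this substitution as a short, purely arithmetic verification, taking care to express both $m_\alpha^A(f)$ and $m_\beta^A(f)$ through the common parameter $n$ so that the factor $(t+1)^{-2}$ and the term $(tn+t+1)$ appear exactly as written.

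The converse is where the real work lies. Here one is handed only the numerical value of $S_{{z}_{e}}(A)$ and must recover that $A$ is $Gt-EDB$. The approach I would take is to show that, among bipartite graphs, the prescribed value of the edge-Szeged index is attained only by $K_{n,tn}$: starting from the bipartite hypothesis and Proposition 2.2, I would use the relation $deg(\alpha)=t\,deg(\beta)$ on adjacent vertices, together with the edge count and the given value of $S_{{z}_{e}}(A)$, to pin down the two part sizes, and then invoke Example 2.1 to conclude that $A$ is $Gt-EDB$. The main obstacle is precisely this uniqueness: the map sending a bipartite graph to its edge-Szeged index is far from injective in general, so the argument must exploit the special arithmetic shape of the target value, in particular its factorization through $(t+1)^2$ and the term $(tn+t+1)$, in order to exclude every bipartite graph other than $K_{n,tn}$. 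I expect this inversion step, rather than the direct computation, to demand the most care.
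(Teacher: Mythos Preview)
Your forward implication takes a different route from the paper. You classify $A$ structurally via Lemma~2.3 and then read off the index from Example~2.1, whereas the paper never names $K_{n,tn}$: it works purely algebraically from $m_\alpha^A(f)=tm_\beta^A(f)$ and the identity $m_\alpha^A(f)+m_\beta^A(f)+m_0^A(f)=|E(A)|-1$, squares $m_\alpha+m_\beta=|E(A)|-m_0-1$ to get $(m_\beta)^2=(|E(A)|-m_0-1)^2/(t+1)^2$, uses $m_\alpha m_\beta=t(m_\beta)^2$, and sums over edges. Your approach buys a concrete model but commits you to the diameter-$2$ hypothesis; the paper's buys generality of form at the cost of leaving the parameter $n$ and the value of $m_0$ to be identified at the end.

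Your converse, however, has a genuine gap. You plan to invoke Proposition~2.2 to obtain $\deg(\alpha)=t\deg(\beta)$ on adjacent pairs, but Proposition~2.2 has as its hypothesis that $A$ is $Gt$--$EDB$, which is precisely the conclusion you are trying to reach; the argument is circular. You also candidly admit that the ``uniqueness'' of the edge-Szeged value among bipartite graphs is unproved, and indeed it is not: the edge-Szeged index is a single integer and cannot by itself pin down the isomorphism type, so a structural inversion of the kind you sketch would need additional hypotheses you have not identified. The paper avoids this entirely by running the forward algebra in reverse, recovering $m_\beta^A(f)=(|E(A)|-m_0^A(f)-1)/(t+1)$ and $m_\alpha^A(f)\,m_\beta^A(f)=t(|E(A)|-m_0^A(f)-1)^2/(t+1)^2$ edge by edge and solving to obtain $m_\alpha^A(f)=tm_\beta^A(f)$, with no appeal to a classification or to Proposition~2.2.
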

 \begin{proof}
 Persume that $A$ is $Gt-EDB$. For every edge $f=\alpha\beta$, we have $m_\alpha^A(f)=tm_\beta^A(f)$ and since $A$ is bipartite also $m_\alpha^A(f)+m_\beta^A(f)+m_0^A(f)=|E(A)|-1$ holds. Therefore,
 \begin{center}
$m_\alpha^A(f)+m_\beta^A(f)=|E(A)|-m_0^A(f)-1$.
\end{center}
Then,
\begin{center}
$t^2(m_\beta^A(f))^2+(m_\beta^A(f))^2+2t(m_\beta^A(f))^2=(|E(A)|-m_0^A(f)-1)^2$.
\end{center}
Hence,
\begin{center}
$(m_\beta^A(f))^2=\frac{(|E(A)|-m_0^A(f)-1)^2}{(t+1)^2}$,
\end{center}
and so,
 \begin{center}
 $\sum_{f\in E(A)}m_{\alpha}^A(f).m_{\beta}^A(f)=t\sum_{f\in E(A)}(m_{\beta}^A(f))^2=$
 \end{center}
 \begin{center}
 $t.\frac{(|E(A)|-m_0^A(f)-1)^2.|E(A)|}{(t+1)^2}=t.\frac{(tn+t+1)^2.tn^2}{(t+1)^2}$.
 \end{center}
For converse, let $S_{{z}_{e}}(A)=t.\frac{(tn+t+1)^2.tn^2}{(t+1)^2}$. Since $A$ is a graph, we have 
$m_\alpha^A(f)+m_\beta^A(f)+m_0^A(f)=|E(A)|-1$ and also 
\begin{center}
$m_0^A(f)+(t+1)m_\beta^A(f)=|E(A)|-1$.
\end{center}
Hence,
\begin{center}
$m_\beta^A(f)=\frac{(|E(A)|-m_0^A(f)-1)}{(t+1)}$.
\end{center}
As well as 
\begin{center}
$m_\alpha^A(f).m_\beta^A(f)=t.\frac{(|E(A)|-m_0^A(f)-1)^2}{(t+1)^2}$,
\end{center}
and hence
\begin{center}
$m_\alpha^A(f)=tm_\beta^A(f)=t.\frac{(|E(A)|-m_0^A(f)-1)}{(t+1)}$.
\end{center}
This implies that $A$ is $Gt-EDB$.
 \end{proof}

\section{$Gt-EDB$ graphs and graph products}
 
We would now investigate situations in which the \textit{Cartesian product} leads to a $Gt-EDBG$. We mention that such product graphs, formed by graphs $A$ and $B$, its vertex set is $V(A\square B)=V(A)\times V(B)$. Consider that $(a_1,b_1)$ and $(a_2,b_2)$ are detached vertices in $V(A\square B)$. In the Cartesian product $A\square B$, if vertices $(a_1,b_1)$ and $(a_2,b_2)$ are coincident in one coordinate and adjacent in the another coordinate, then they are adjacent, that is, $a_1=a_2$ and $b_1b_2\in E(B)$, or $b_1=b_2$ and $a_1a_2\in E(A)$. Obviously, for vertices we have:
\begin{center}
$d_{A\square B}((a_1,b_1),(a_2,b_2))=d_A(a_1,a_2)+d_B(b_1,b_2)$.
\end{center}
For edges we have:
\begin{center}
$d_{A\square B}((a,b)(a_1,b_1),(\acute{a},\acute{b})(\acute{a}_1,\acute{b}_1))=$
\end{center}
\begin{center}
$\min\{d_{A\square B}((a,b),(\acute{a},\acute{b})),d_{A\square B}((a,b),(\acute{a}_1,\acute{b}_1)),d_{A\square B}((a_1,b_1),(\acute{a},\acute{b}))$, \\ $d_{A\square B}((a_1,b_1),(\acute{a}_1,\acute{b}_1))\}=$
 \end{center}
 \begin{center}
$\min\{d_A(a,\acute{a})+d_B(b,\acute{b}),d_A(a,\acute{a}_1)+d_B(b,\acute{b}_1),
d_A(a_1,\acute{a})+d_B(b_1,\acute{b}),d_A(a_1,\acute{a}_1)+d_B(b_1,\acute{b}_1)\}$.
\end{center}

\begin{proposition}
Suppose that $A$ and $B$ are connected graphs. Then $A\square B$ is a $Gt-EDB$ graph if and only if $A$ and $B$ are both $Gt-EDB$ and $Gt-DB$ graphs.
\end{proposition}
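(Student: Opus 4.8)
The plan is to exploit the layer structure of the Cartesian product. Every edge of $A\square B$ is either an \emph{$A$-edge} $(a_1,b)(a_2,b)$ lying over some $f_A=a_1a_2\in E(A)$, or a \emph{$B$-edge} $(a,b_1)(a,b_2)$ lying over some $g_B=b_1b_2\in E(B)$. Fixing an $A$-edge $f=(a_1,b)(a_2,b)$ with $\alpha=(a_1,b)$ and $\beta=(a_2,b)$, I would use the additive distance formula $d_{A\square B}((a,b),(a',b'))=d_A(a,a')+d_B(b,b')$ to decide, for each other edge $\acute{f}$, whether it is nearer $\alpha$ or $\beta$. Since $\alpha,\beta$ share the $B$-coordinate $b$, an $A$-edge $\acute{f}$ lying over $\acute{f}_A$ is nearer $\alpha$ exactly when $\acute{f}_A\in M_{a_1}^A(f_A)$, independently of which of the $|V(B)|$ layers it sits in; and a $B$-edge lying at position $\acute{a}$ is nearer $\alpha$ exactly when $\acute{a}\in W_{a_1,a_2}^A$, for each of the $|E(B)|$ choices of $B$-edge at that position. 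This yields the key identity
\[
m_\alpha^{A\square B}(f)=|V(B)|\,m_{a_1}^A(f_A)+|E(B)|\,n_{a_1}^A(f_A),
\]
and symmetrically $m_\beta^{A\square B}(f)=|V(B)|\,m_{a_2}^A(f_A)+|E(B)|\,n_{a_2}^A(f_A)$; interchanging the roles of $A$ and $B$ gives the analogous formulas for $B$-edges.

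For the implication $(\Leftarrow)$, suppose $A$ and $B$ are each both $Gt$-EDB and $Gt$-DB. Taking an $A$-edge and assuming (after relabelling) that the heavier endpoint for edges and for vertices is the same, say $m_{a_1}^A(f_A)=t\,m_{a_2}^A(f_A)$ and $n_{a_1}^A(f_A)=t\,n_{a_2}^A(f_A)$, the identity above factors a $t$ out of both summands, giving $m_\alpha^{A\square B}(f)=t\,m_\beta^{A\square B}(f)$. The same computation on $B$-edges uses that $B$ is $Gt$-EDB and $Gt$-DB. Hence every edge of $A\square B$ satisfies the ratio condition and the product is $Gt$-EDB.

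For $(\Rightarrow)$, suppose $A\square B$ is $Gt$-EDB. Restricting the ratio condition to $A$-edges gives, for every $f_A=a_1a_2\in E(A)$,
\[
|V(B)|\,m_{a_1}^A(f_A)+|E(B)|\,n_{a_1}^A(f_A)=t\bigl(|V(B)|\,m_{a_2}^A(f_A)+|E(B)|\,n_{a_2}^A(f_A)\bigr)
\]
(or its reverse), and restricting to $B$-edges yields the symmetric relation for $B$. I would then argue that such a mixed relation forces the two summands to scale by $t$ separately, i.e.\ $m_{a_1}^A=t\,m_{a_2}^A$ and $n_{a_1}^A=t\,n_{a_2}^A$, which is precisely the statement that $A$ is $Gt$-EDB and $Gt$-DB; the conclusion for $B$ follows from the $B$-edges.

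This last separation is where I expect the real obstacle to lie. A single linear equation $|V(B)|X+|E(B)|Y=t(|V(B)|X'+|E(B)|Y')$ does not by itself split into $X=tX'$ and $Y=tY'$, so one must rule out the ``mixed'' possibilities. Here I would use positivity together with $t>1$: if the edge-count and the vertex-count were balanced in \emph{opposite} directions, substituting $m_{a_1}^A=t\,m_{a_2}^A$ together with $n_{a_2}^A=t\,n_{a_1}^A$ into the relation collapses it to $|E(B)|\,n_{a_1}^A(f_A)(1-t^2)=0$, forcing $n_{a_1}^A(f_A)=0$, which is impossible since $a_1\in W_{a_1,a_2}^A$. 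Handling the fully general case, where neither count is assumed in advance to be balanced, requires combining this positivity argument with the integrality of all the counts and with the fact that the same relation holds simultaneously across \emph{all} edges; making this splitting airtight, and confirming that the heavier sides for $m$ and for $n$ must coincide, is the crux of the argument.
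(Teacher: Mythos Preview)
Your approach is the same as the paper's: both derive the key identity
\[
m_{(a_1,b)}^{A\square B}(f)=|V(B)|\,m_{a_1}^A(f_A)+|E(B)|\,n_{a_1}^A(f_A)
\]
(this is the paper's equations (3)--(6)) and then use it in both directions. The paper's argument for $(\Leftarrow)$ is exactly your factoring-out-$t$ step, and its argument for $(\Rightarrow)$ is exactly the restriction to $A$-edges and $B$-edges that you describe.

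The difference is that the paper does not address the difficulty you isolate. In the $(\Rightarrow)$ direction, from the single combined linear relation the paper simply writes ``Hence $A$ is $Gt$-EDB and $Gt$-DB'' with no argument whatsoever for why the equation splits into $m_{a_1}^A=t\,m_{a_2}^A$ and $n_{a_1}^A=t\,n_{a_2}^A$ separately. Likewise, in the $(\Leftarrow)$ direction the paper tacitly assumes that the heavier side for $m$ and the heavier side for $n$ coincide along each edge, which is precisely the coherence issue you flag. So you have not only reproduced the paper's proof, you have also correctly identified the points it leaves unjustified; your partial positivity argument (the $(1-t^2)$ collapse) already goes beyond what the paper supplies, even though, as you note, it does not yet settle the general case.
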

\begin{proof}
Consider that $a_1,a_2$ are adjacent vertices in $V(A)$, and $b_1,b_2$ are two adjacent vertices in $V(B)$. We assume that $(a_1,b_1) ,(a_2,b_1) ,(a_1,b_2)\in V(A\square B)$. We observe that
\begin{center}
$(a,b)(\acute{a},\acute{b})\in M_{(a_1,b_1)}((a_1,b_1)(a_2,b_1))\Leftrightarrow$
\end{center}
\begin{center}
$\min\{d_{A\square B}((a,b),(a_1,b_1)),d_{A\square B}((\acute{a},\acute{b}),(a_1,b_1))\}<\min\{d_{A\square B}((a,b),(a_2,b_1)),d_{A\square B}((\acute{a},\acute{b}),(a_2,b_1))\}\Leftrightarrow$
\end{center}
\begin{center}
$\min \{d_A(a,a_1)+d_B(b,b_1),d_A(\acute{a},a_1)+d_B(\acute{b},b_1)\}<\min \{d_A(a,a_2)+d_B(b,b_1),d_A(\acute{a},a_2)+d_B(\acute{b},b_1)\}\Leftrightarrow$
\end{center}
\begin{center}
$\min\{d_A(a,a_1),d_A(\acute{a},a_1)\}<\min\{d_A(a,a_2),d_A(\acute{a},a_2)\}$.
\end{center}
We deduce that
\begin{center}
$M_{(a_1,b_1)}((a_1,b_1)(a_2,b_2))=\{(a,b)(\acute{a},\acute{b})\in E(A\square B)|a\acute{a}\in E(A) , b=\acute{b}$ or\\ $b\acute{b}\in E(B)$ , $a=\acute{a}$ ,
\end{center}

$\hspace*{1cm}$ $\min\{d_A(a,a_1),d_A(\acute{a},a_1)\}<\min\{d_A(a,a_2),d_A(\acute{a},a_2)\}\}$, $\hspace*{1.2cm}$(2)\\\\
and hence applying (2) and by [19. Theorem 2.1] we conclude that:\\\\
$\hspace*{1.1cm}$ $m^{A\square B}_{(a_1,b_1)}((a_1,b_1)(a_2,b_2))=m^A_{a_1}(a_1a_2).|V(B)|+n^A_{a_1}(a_1a_2).|E(B)|$.$\hspace*{0.25cm}$(3)\\\\
Similar to this process we obtain\\\\
$\hspace*{1.1cm}$ $m^{A\square B}_{(a_2,b_2)}((a_2,b_2)(a_1,b_1))=m^A_{a_2}(a_1a_2).|V(B)|+n^A_{a_2}(a_1a_2).|E(B)|$,$\hspace*{0.27cm}$(4)\\\\
$\hspace*{1cm}$ $m^{A\square B}_{(a_1,b_1)}((a_1,b_1)(a_1,b_2))=m^B_{b_1}(b_1b_2).|V(A)|+n^B_{b_1}(b_1b_2).|E(A)|$,$\hspace*{0.65cm}$(5)\\\\
$\hspace*{1cm}$ $m^{A\square B}_{(a_1,b_2)}((a_1,b_2)(a_1,b_1))=m^B_{b_2}(b_1b_2).|V(A)|+n^B_{b_2}(b_1b_2).|E(A)|$.$\hspace*{0.65cm}$(6)\\\\
Persume that both $A$ and $B$ are both $Gt-EDBG$ and $Gt-DBG$. Since $A$ is $Gt-EDB$ and $Gt-DB$, then by Equalities (3) and (4) we have:\\\\
$m^A_{a_1}(a_1a_2).|V(B)|+n^A_{a_1}(a_1a_2).|E(B)|=t m^A_{a_2}(a_1a_2).|V(B)|+t n^A_{a_2}(a_1a_2).|E(B)|$,\\\\
Therefore,
\begin{center}
$m^{A\square B}_{(a_1,b_1)}((a_1,b_1)(a_2,b_2))=t m^{A\square B}_{(a_2,b_2)}((a_1,b_1)(a_2,b_2))$.
\end{center}
By analogy, using (5) and (6), we conclude that
\begin{center}
$m^{A\square B}_{(a_1,b_1)}((a_1,b_1)(a_1,b_2))=t m^{A\square B}_{(a_1,b_2)}((a_1,b_1)(a_1,b_2))$,
\end{center}
and hence $A\square B$ is $Gt-EDB$.\\
For converse, let $A\square B$ is $Gt-EDB$, then applying (3) and (4) we observe that
\begin{center}
$m^{A\square B}_{(a_1,b_1)}((a_1,b_1)(a_2,b_2))=t m^{A\square B}_{(a_2,b_2)}((a_1,b_1)(a_2,b_2))\Rightarrow$
\end{center}
$m^A_{a_1}(a_1a_2).|V(B)|+n^A_{a_1}(a_1a_2).|E(B)|=t m^A_{a_2}(a_1a_2).|V(B)|+t n^A_{a_2}(a_1a_2).|E(B)|$,\\\\
Hence $A$ is $Gt-EDB$ and $Gt-DB$. In the same way, (5) and (6) yield that $B$ is $Gt-EDB$ and $Gt-DB$. This completes the result.
\end{proof}

We would define the lexicographic product graphs. Product graph $A[B]$ of the graphs $A$ and $B$, where its vertex set is $V(A[B])=V(A)\times V(B)$, and two its adjacent vertices are $(a_1,b_1) , (a_2,b_2)$ is defined the \emph{lexicographic product} if $a_1a_2\in E(A)$ or if $a_1=a_2$ and also $b_1b_2\in E(B)$ (for more information see [12,p.22]). Since $A$ is a graph, thus it is easily seen for vertices that
\begin{center}
$\ d_{A[B]}((a_1,b_1),(a_2,b_2))=\left\{ \begin{array} {cc} d_A(a_1,a_2) & if a_1\neq a_2\\ \min\{2,d_B(b_1,b_2)\} & if a_1=a_2. \end{array}\right.$
\end{center}

And for edges we have:
\begin{center}
$ d_{A[B]}((a,b)(a_1,b_1),(\acute{a},\acute{b})(\acute{a}_1,\acute{b}_1))=$
\end{center}

\begin{equation}\min 
\left\{
\begin{array}{cccc}
d_A(a,\acute{a}) & if a\neq \acute{a}, & \min\{2,d_B(b,\acute{b})\} & if a=\acute{a} \notag\\
d_A(a,\acute{a_1}) & if a\neq \acute{a_1}, & \min\{2,d_B(b,\acute{b_1})\} & if a=\acute{a_1}\\
d_A(a_1,\acute{a}) & if a_1\neq \acute{a}, & \min\{2,d_B(b_1,\acute{b})\} & if a_1=\acute{a}\\
d_A(a_1,\acute{a}_1) & if a_1\neq \acute{a}_1, & \min\{2,d_B(b_1,\acute{b}_1)\} & if a_1=\acute{a}_1
\end{array}
\right\}.
\end{equation}

\begin{proposition}
Persume that $A$ and $B$ are graphs. Then there is the lexicographic product $A[B]$, and $A[B]$ is $Gt-EDB$ if and only if $A$ is $Gt-EDB$ and also $B$ is an empty graph.
\end{proposition}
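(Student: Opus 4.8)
The plan is to prove both implications by separating the two kinds of edges that can occur in $A[B]$: the \emph{horizontal} edges $(a_1,b_1)(a_2,b_2)$ lying over an edge $a_1a_2\in E(A)$, and the \emph{vertical} edges $(a,b_1)(a,b_2)$ lying over an edge $b_1b_2\in E(B)$ inside a single fibre $\{a\}\times V(B)$. Using the edge-distance formula for the lexicographic product displayed above, the first step is to record that $d_{A[B]}$ from an endpoint of $f$ to an arbitrary edge $\acute f$ is governed almost entirely by the $A$-coordinates: when the two vertices being compared share their $A$-coordinate, only the $B$-distance inside the common fibre (capped at $2$) can separate them. This is the lexicographic analogue of the decomposition $[19,\ \text{Theorem } 2.1]$ used for the Cartesian product in Proposition 3.1, and it reduces every $m$-count in $A[B]$ to $m$- and $n$-counts in $A$ together with a controlled fibre correction.

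For the implication $(\Leftarrow)$, assume $A$ is $Gt-EDB$ and $B$ is empty. Since $E(B)=\varnothing$, the product $A[B]$ has no vertical edges, so it suffices to treat a horizontal edge $f=(a_1,b_1)(a_2,b_2)$ over $e=a_1a_2$. I would partition $E(A[B])$ by the position of the $A$-coordinates of $\acute f$ relative to $\{a_1,a_2\}$: the $|V(B)|^2$ edges lying over an edge of $A$ disjoint from $\{a_1,a_2\}$ are all sorted exactly as the underlying edge of $A$ is sorted by $e$, while the edges lying over an edge of $A$ incident to $a_1$ (resp. $a_2$) and over $e$ itself contribute fibre-correction terms expressible through $\deg_A(a_1)$ (resp. $\deg_A(a_2)$) and $|V(B)|$. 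Collecting these contributions writes $m^{A[B]}_{(a_1,b_1)}(f)$ and $m^{A[B]}_{(a_2,b_2)}(f)$ in terms of $m^A_{a_1}(e)$, $m^A_{a_2}(e)$ and the degrees, after which I would verify that the relation $m^A_{a_1}(e)=t\,m^A_{a_2}(e)$ (or its reverse) supplied by the hypothesis on $A$ propagates to the corresponding relation in $A[B]$, giving that $A[B]$ is $Gt-EDB$.

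For the converse $(\Rightarrow)$, suppose $A[B]$ is $Gt-EDB$. I would first show $B$ is empty by contradiction: if $b_1b_2\in E(B)$, fix $a\in V(A)$ and consider the vertical edge $f=(a,b_1)(a,b_2)$. By the first step, every edge of $A[B]$ with no endpoint in the fibre $\{a\}\times V(B)$ is equidistant from $(a,b_1)$ and $(a,b_2)$, while the edges meeting the fibre in exactly one endpoint split evenly (contributing $\deg_A(a)\,|V(B)|$ to each side). Hence the whole discrepancy $m^{A[B]}_{(a,b_1)}(f)-m^{A[B]}_{(a,b_2)}(f)$ is carried by the edges lying inside the fibre, a quantity bounded by $|E(B)|$, whereas each of the two counts is at least $\deg_A(a)\,|V(B)|$. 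Feeding these bounds into the required identity $m^{A[B]}_{(a,b_1)}(f)=t\,m^{A[B]}_{(a,b_2)}(f)$ (or its reverse) with $t>1$ yields a numerical impossibility, so $E(B)=\varnothing$; once this is known, inverting the horizontal computation of the previous paragraph shows the $Gt-EDB$ property of $A[B]$ descends to $A$.

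The main obstacle is the vertical-edge analysis. Establishing cleanly that the in-fibre contribution is negligible against the balanced $\deg_A(a)\,|V(B)|$ term, \emph{uniformly} over the choice of $a\in V(A)$ and over the edge $b_1b_2$, is the delicate point, since the capping at distance $2$ makes the in-fibre count behave like a truncated edge-distance statistic of $B$ rather than a clean multiple. I would address this by selecting $a$ of maximal degree and isolating the small cases of $B$ separately; the remaining bookkeeping (the exact fibre-correction constants, the connectivity of $A[B]$, and the degenerate cases in which $A$ or $B$ is a single vertex) is routine and can be confined to the supporting case analysis.
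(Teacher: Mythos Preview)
Your treatment of the easy direction $(\Leftarrow)$ and of the horizontal edges is in the same spirit as the paper's: once $E(B)=\varnothing$ every edge of $A[B]$ sits over an edge of $A$, and one expresses $m^{A[B]}_{(a_1,b_1)}(f)$ as a function of $m^A_{a_1}(a_1a_2)$ and $|V(B)|$. The paper records this simply as $m^{A[B]}_{(a_i,b_i)}(f)=|V(B)|\cdot m^A_{a_i}(a_1a_2)$ and reads off the equivalence; your more careful bookkeeping with $|V(B)|^2$ weights and degree corrections is heading to the same conclusion.

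For the direction $(\Rightarrow)$, however, the paper does \emph{not} analyse vertical edges at all. It argues instead that a $Gt$-EDB graph must be bipartite (this is the content attributed to Lemma~2.3), and then observes that if $B$ has an edge $b_1b_2$ and $a_1a_2\in E(A)$, the three vertices $(a_1,b_1),(a_1,b_2),(a_2,b_1)$ form a triangle in $A[B]$, so $A[B]$ is not bipartite and hence not $Gt$-EDB. Thus $E(B)=\varnothing$, and the horizontal computation then transfers the $Gt$-EDB property back to $A$. This bipartite/triangle route is short and entirely sidesteps the fibre analysis you flag as delicate.

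Your proposed counting route has a genuine gap at exactly the point you identify. From $m_1=t\,m_2$ one obtains $(t-1)m_2=m_1-m_2\le |E(B)|$, while $m_2\ge \deg_A(a)\,|V(B)|$, giving only
\[
(t-1)\,\deg_A(a)\,|V(B)|\ \le\ |E(B)|.
\]
This is \emph{not} automatically impossible: for $A=K_2$, $t=2$, and $B=K_n$ with $n\ge 3$ it reads $n\le \binom{n}{2}$, which holds. (Of course $K_2[K_n]=K_{2n}$ is \emph{not} $G2$-EDB, but your inequalities alone do not detect this.) Choosing $a$ of maximal degree does not help when $A$ has small maximum degree relative to $|V(B)|$, so the ``numerical impossibility'' you invoke does not follow from the bounds you set up. One can push further---for a fixed edge $b_1b_2$ the fibre contributions $\mu_1,\mu_2$ are independent of $a$, which forces $\deg_A(a)$ to take a single value and hence $A$ to be regular---but this still does not yield a contradiction without additional input. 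The paper's bipartiteness argument avoids this difficulty entirely.
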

\begin{proof}
Let us choose $A$ as a connected graph, and $A[B]$ as a $Gt-EDBG$. Therefore, it yields $A[B]$ is a bipartite graph based on Lemma 2.3. Since $B$ has at least an edge, then it is understandable that $A[B]$ is not a bipartite graph. Therefore, $B$ is an empty graph. Assume that $a_1$ and $a_2$ be adjacent vertices in $A$, and $b_1,b_2\in V(B)$ ($b_1$  and $b_2$ are not necessarily distinct). We inspire of the definition of the lexicographic product that $(a_1,b_1)$ and $(a_2,b	_2)$ are adjacent, and using (2) we observe that,

\begin{center}
$M^{A[B]}_{(a_1,b_1)}=\{(a_1,b_1)(a,b)\}\cup \{(a_2,\acute{b})(a,b)|\acute{b}\in  V(B)\backslash \{b_2\}\}$
\end{center}
\begin{center}
$\cup \{(a,b)(a_1,\acute{b})| aa_1\in M^A_{a_1}(a_1a_2), a\acute{a}\neq a_1a_2, \acute{b}\in V(B)$\},
\end{center}
where $a\acute{a}\neq a_1a_2$, that is $\{a\neq a_1, \acute{a}\neq a_2\}$ or $\{\acute{a}\neq a_2, \acute{a}\neq a_1\}$.\\
Therefore, $m^{A[B]}_{(a_1,b_1)}((a_1,b_1)(a_2,b_2))=|V(B)| . m^A_{a_1}(a_1a_2)$. Similarly we attain
$m^{A[B]}_{(a_2,b_2)}((a_1,b_1)(a_2,b_2))=|V(B)| . m^A_{a_2}(a_1a_2)$. For an empty graph $B$ it is simple to be realized, the lexicographic product $A[B]$ is $Gt-EDB$ if and only if $A$ is $Gt-EDB$. The proof is completed.
\end{proof}

\section{$Gt-NEDB$ and $Gt-SEDB$ graphs}
For every edge $f=\alpha\beta\in E(A)$ in a $Gt-EDB$ graph, $m_\alpha^A(f)=tm_\beta^A(f)$. Consider that for every arbitrary edge $f\in E(A)$, the constant of $m_\beta^A(f)$ is $\acute{\gamma}_A$. Then $A$ is introduced as \textit{Generalized $t$-nicely edge distance-balanced} ($Gt-NEDB$). Each $Gt-NEDB$ graph is $Gt-EDB$ and every bipartite $Gt-EDB$ graph is $Gt-NEDB$. At first, we start with the below lemma.

\begin{lemma}
If $A$ be a connected $Gt-NEDB$ graph with diameter of $d$, then for each arbitrary edge $f=\alpha\beta\in E(A)$, there will precisely be $|E(A)|-(t+1)\acute{\gamma}_A$ edges of $A$, such that will be at the equal distance from $\alpha$ and $\beta$. On the other hand, $m_0^A(f)=|E(A)|-(t+1)\acute{\gamma}_A-1$.
\end{lemma}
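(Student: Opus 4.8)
The plan is to deduce the claim directly from the fundamental edge-partition identity together with the defining constant of a $Gt-NEDB$ graph; neither the diameter $d$ nor any induction is actually needed beyond the connectivity that guarantees all distances are finite. First I would fix an arbitrary edge $f=\alpha\beta\in E(A)$ and observe that every other edge $\acute{f}\in E(A)\setminus\{f\}$ satisfies exactly one of $d_A(\alpha,\acute{f})<d_A(\beta,\acute{f})$, $d_A(\alpha,\acute{f})>d_A(\beta,\acute{f})$, or $d_A(\alpha,\acute{f})=d_A(\beta,\acute{f})$. Hence $M_\alpha^A(f)$, $M_\beta^A(f)$ and $M_0^A(f)$ partition $E(A)\setminus\{f\}$, which gives the counting identity
\begin{center}
$m_\alpha^A(f)+m_\beta^A(f)+m_0^A(f)=|E(A)|-1$,
\end{center}
the same relation already invoked in the proof of Proposition 2.4.

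Next I would substitute the structural hypothesis. Since $A$ is $Gt-NEDB$, this edge satisfies $m_\alpha^A(f)=t\,m_\beta^A(f)$, and by definition $m_\beta^A(f)=\acute{\gamma}_A$ is independent of the chosen edge. Thus $m_\alpha^A(f)=t\acute{\gamma}_A$ and $m_\alpha^A(f)+m_\beta^A(f)=(t+1)\acute{\gamma}_A$. Feeding this into the identity above and solving for $m_0^A(f)$ produces
\begin{center}
$m_0^A(f)=|E(A)|-(t+1)\acute{\gamma}_A-1$,
\end{center}
which is precisely the second assertion of the lemma.

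Finally I would account for $f$ itself. Because $\alpha$ and $\beta$ are the endpoints of $f$, we have $d_A(\alpha,f)=d_A(\beta,f)=0$, so $f$ is equidistant from $\alpha$ and $\beta$ yet lies outside the partition of $E(A)\setminus\{f\}$ used above. The complete collection of edges lying at equal distance from $\alpha$ and $\beta$ is therefore $M_0^A(f)\cup\{f\}$, of cardinality $m_0^A(f)+1=|E(A)|-(t+1)\acute{\gamma}_A$, giving the first assertion. The argument is essentially a single substitution; the only step that demands attention is the bookkeeping for $f$, since the discrepancy of $1$ between the two quantities in the statement is exactly this reference edge, which is counted among the equidistant edges in the first claim but excluded from $M_0^A(f)$ in the second.
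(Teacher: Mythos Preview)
Your argument is correct and follows exactly the paper's own approach: the paper's proof is the single line ``It is directly deduced from $m_\alpha^A(f)+m_\beta^A(f)+m_0^A(f)=|E(A)|-1$,'' and you have simply written out that deduction in full, together with the extra bookkeeping for the reference edge $f$ that explains the discrepancy of $1$ between the two assertions.
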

\begin{proof}
It is directly deduced from $m_\alpha^A(f)+m_\beta^A(f)+m_0^A(f)=|E(A)|-1$.
\end{proof}

\begin{lemma}
Suppose that $A$ is a connected $Gt-NEDB$ graph with diameter $d$. Then $d-1\leq t\acute{\gamma}_A$.
\end{lemma}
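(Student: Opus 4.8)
The plan is to exhibit, for one well-chosen edge, at least $d-1$ distinct edges that all lie strictly nearer to a single endpoint, and then to read off the inequality directly from the $Gt$-$NEDB$ hypothesis. The point is that if $f=\alpha\beta$ is any edge, then $A$ being $Gt$-$NEDB$ forces the unordered pair $\{m_\alpha^A(f),m_\beta^A(f)\}$ to equal $\{\acute{\gamma}_A,t\acute{\gamma}_A\}$; in particular each of $m_\alpha^A(f)$ and $m_\beta^A(f)$ is at most $t\acute{\gamma}_A$. Hence it suffices to locate one edge $f$ together with one of its endpoints to which at least $d-1$ edges are strictly nearer.

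To do this I would first choose a diametral pair of vertices, say $u,w$ with $d_A(u,w)=d$, and fix a shortest path $v_0=u,v_1,\dots,v_d=w$ between them. Since every subpath of a geodesic is again a geodesic, we have the exact values $d_A(v_0,v_j)=j$ and $d_A(v_1,v_j)=j-1$ for all $j\ge 1$. I then single out the edge $f=v_0v_1$ and label its endpoints $\alpha=v_0$ and $\beta=v_1$.

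Next I would examine the remaining path-edges $e_k=v_{k-1}v_k$ for $2\le k\le d$. Using the definition of the distance from a vertex to an edge together with the exact values above, one finds $d_A(\alpha,e_k)=\min\{k-1,k\}=k-1$ and $d_A(\beta,e_k)=\min\{k-2,k-1\}=k-2$, so each $e_k$ is strictly nearer to $\beta$ than to $\alpha$, i.e. $e_k\in M_\beta^A(f)$. These $d-1$ edges are pairwise distinct and distinct from $f=e_1$, whence $m_\beta^A(f)\ge d-1$. Combining this with $m_\beta^A(f)\le t\acute{\gamma}_A$ from the first paragraph yields $d-1\le t\acute{\gamma}_A$; the degenerate case $d\le 1$ is immediate, since then $d-1\le 0\le t\acute{\gamma}_A$.

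The computations here are routine once the geodesic is fixed, so the only genuine points to get right are (i) that the $v_1$-side distances are the \emph{exact} values $j-1$ rather than mere upper bounds, which is precisely where the subpath-of-a-geodesic property is used to guarantee the strict inequality $k-2<k-1$, and (ii) that these $d-1$ edges are honestly distinct from one another and from $f$, so that they really contribute $d-1$ to the count $m_\beta^A(f)$. Beyond keeping the edge-to-vertex distance bookkeeping consistent, I expect no serious obstacle.
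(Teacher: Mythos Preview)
Your proof is correct and follows essentially the same approach as the paper: pick a diametral geodesic, take its first edge $f=v_0v_1$, and observe that the remaining $d-1$ path-edges all lie strictly nearer to $v_1$, so $m_{v_1}^A(f)\ge d-1\le t\acute{\gamma}_A$. Your treatment is in fact tidier than the paper's, since you explicitly invoke the subpath-of-a-geodesic property and replace the paper's ``without loss of generality'' with the cleaner observation that both of $m_\alpha^A(f),m_\beta^A(f)$ are bounded above by $t\acute{\gamma}_A$.
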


\begin{proof}
Let a path $a_0,a_1,a_2,...,a_{d-1},a_d$. Also, we have $h_1,h_2,h_3,...,h_d$ as a series of edges, in which $d_A(h_1,h_d)=d$. Now, let $h_1=a_0a_1$ be an arbitrary edge of $A$. without loss of generality, we asuume that $m_{a_1}^A(h_1)=tm_{a_0}^A(h_1)$. Hence, $|\{h_2,...,h_d\}|=d-1\leq m_{a_1}^A(h_1)=tm_{a_0}^A(h_1)$. This shows that  $d-1\leq t\acute{\gamma}_A$.
\end{proof}

\begin{lemma}
Persume that $A$ and $B$ are graphs. Then, $A\square B$ is $Gt-NEDB$ if and only if both $A$ and $B$ are $Gt-NEDB$ and 
\begin{center}
$|V(B)|.\acute{\gamma}_A+|E(B)|.\gamma_A=|V(A)|.\acute{\gamma}_B+|E(A)|.\gamma_B$.
\end{center}
\end{lemma}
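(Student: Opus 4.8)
The plan is to reduce everything to the edge-counting formulas (3)--(6) from Proposition 3.1, together with its characterization that $A\square B$ is $Gt-EDB$ precisely when $A$ and $B$ are both $Gt-EDB$ and $Gt-DB$. The starting observation is that every edge of $A\square B$ is either an \emph{$A$-edge} $(a_1,b)(a_2,b)$ with $a_1a_2\in E(A)$ and $b$ fixed, or a \emph{$B$-edge} $(a,b_1)(a,b_2)$ with $b_1b_2\in E(B)$ and $a$ fixed, so the constant $m_\beta^{A\square B}$ may be evaluated separately on the two types.

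First I would fix the reading of the hypothesis. Because the displayed identity involves $\gamma_A,\gamma_B$, I interpret ``$A$ is $Gt-NEDB$'' as: $A$ is $Gt-EDB$ with constant minority value $m_\beta^A(f)=\acute{\gamma}_A$ for every edge, and $A$ is $Gt-DB$ with constant minority value $n_\beta^A(f)=\gamma_A$ for every edge (and likewise for $B$). I would also record the consistency that on each edge the endpoint realizing the smaller edge-count $m_\beta^A$ is the same endpoint that realizes the smaller vertex-count $n_\beta^A$; this guarantees that on a single $A$-edge both $\acute{\gamma}_A$ and $\gamma_A$ are attached to the same endpoint of $A\square B$.

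For the forward direction, assume $A\square B$ is $Gt-NEDB$. By Proposition 3.1, $A$ and $B$ are both $Gt-EDB$ and $Gt-DB$, and the niceness passes to these properties as above. Substituting the constants into (4) gives, for any $A$-edge, $m_\beta^{A\square B}=\acute{\gamma}_A\,|V(B)|+\gamma_A\,|E(B)|$, while (6) gives, for any $B$-edge, $m_\beta^{A\square B}=\acute{\gamma}_B\,|V(A)|+\gamma_B\,|E(A)|$. Since $A\square B$ is nicely balanced, $m_\beta^{A\square B}$ is one and the same number on all edges, so the two expressions must coincide, which is exactly the stated identity. For the converse, assume $A,B$ are $Gt-NEDB$ and the identity holds; Proposition 3.1 already yields that $A\square B$ is $Gt-EDB$, and (4),(6) show that $m_\beta^{A\square B}$ equals the single value $\acute{\gamma}_A|V(B)|+\gamma_A|E(B)|=\acute{\gamma}_B|V(A)|+\gamma_B|E(A)|$ on every edge, with $m_\alpha^{A\square B}=t\,m_\beta^{A\square B}$ throughout. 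Hence $A\square B$ is $Gt-NEDB$.

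The main obstacle I anticipate is the orientation bookkeeping: deciding, for each product edge, which endpoint is the $\beta$-side (the minority side) and checking that the edge-minimal and vertex-minimal endpoints agree in $A$ (and in $B$). If they disagreed, formulas (3)/(4) or (5)/(6) would pair $\acute{\gamma}$ with the majority vertex-count $t\gamma$ and the clean additive combination $\acute{\gamma}_A|V(B)|+\gamma_A|E(B)|$ would fail. I expect this consistency to hold for a graph that is simultaneously nicely $Gt-EDB$ and nicely $Gt-DB$, but it should be stated and used explicitly rather than taken for granted.
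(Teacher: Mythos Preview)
Your proposal is correct and follows essentially the same route as the paper: both arguments reduce the question to the edge-counting formulas (3)--(6), compute $m_\beta^{A\square B}$ separately on $A$-edges and $B$-edges, and equate the two values to obtain the displayed identity (and conversely). Your reading of ``$Gt$-NEDB'' as implicitly packaging nicely $Gt$-DB is exactly what the paper does in its own proof (it explicitly concludes ``$A$ and $B$ are both $Gt$-NEDB and $Gt$-NDB''), and your caution about the orientation bookkeeping is a point the paper simply glosses over.
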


\begin{proof}
Consider that vertices $(a_1,b_1)$ and $(a_2,b_2)$ are in $V(A\square B)$. According to the definition, either $b_1=b_2$ and $a_1$ and $a_2$ are adjacent in $A$, or $a_1=a_2$ and $b_1$ and $b_2$ are adjacent in $B$. Let now $b_1=b_2$ and $a_1$ and $a_2$ be adjacent in $A$. Assume now that $A\square B$ is $Gt-NEDB$. Using equations (3),(4),(5) and (6), we obtain
\begin{center}
$m_{a_1}^A(a_1a_2).|V(B)|+n_{a_1}^A(a_1a_2).|E(B)|=t\{m_{a_2}^A(a_1a_2).|V(B)|+n_{a_2}^A(a_1a_2).|E(B)|\}$=
$m_{b_1}^B(b_1b_2).|V(A)|+n_{b_1}^B(b_1b_2).|E(A)|=t\{m_{b_2}^B(b_1b_2).|V(A)|+n_{b_2}^B(b_1b_2).|E(A)|\}$=
\end{center}

$\hspace*{4cm}$ $t(\acute{\gamma}_{A\square B}+\gamma_{A\square B})$,$\hspace*{4.6cm}(7)$

in which
\begin{center}
$\gamma_{A\square B}=n_{a_1}^A(a_1a_2).|E(B)|=n_{a_2}^A(a_1a_2).|E(B)|$,
\end{center}
and
\begin{center}
$\acute{\gamma}_{A\square B}=m_{a_1}^A(a_1a_2).|V(B)|=m_{a_2}^A(a_1a_2).|V(B)|$.
\end{center}
We conclude that from (7) that $A$ and $B$ are both $Gt-NEDB$ and $Gt-NDB$ and
$|V(B)|.\acute{\gamma}_A+|E(B)|.\gamma_A=|V(A)|.\acute{\gamma}_B+|E(A)|.\gamma_B$ holds.
Conversely, if $A$ and $B$ are both $Gt-NEDB$ and $Gt-NDB$ with $|V(B)|.\acute{\gamma}_A+|E(B)|.\gamma_A=|V(A)|.\acute{\gamma}_B+|E(A)|.\gamma_B$. By equations (3),(4),(5) and (6), $A\square B$ is a $Gt-NEDB$ graph.
\end{proof}

While for each arbitrary edge $f=\alpha\beta \in E(A)$ and every $i\in[1,d-1]$ in a graph $A$ with diameter $d$, we have $|\acute{D}_{i-1}^i(f)|=t|\acute{D}_i^{i-1}(f)|+(t-1)$, then such graphs are called $Gt-SEDB$. Let $t=1$. Then graph $A$ is a $SEDB$ graph.
\begin{example}
A class of $Gt-SEDB$ graphs are complete bipartite graphs $K_{n,tn}$.
\end{example}

\begin{proof}
Let $f=\alpha\beta\in E(K_{n,tn})$. Since $K_{n,tn}$ is bipartite with diameter 2, we attain $\acute{D}_1^2(f)=(n-1)$ and $\acute{D}_2^1(f)=t(n-1)$. Therefore, $\acute{D}_2^1(f)=t\acute{D}_1^2(f)+(t-1)$ and it completes the result.
\end{proof}

\begin{lemma}
Consider that $A$ is a $Gt-SEDB$ graph with diameter 2. Then $A$ is a $Gt-EDB$ graph for $t\in\mathbb{N}$ and $t>1$.
\end{lemma}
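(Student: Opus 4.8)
The plan is to specialize the defining relation of a $Gt-SEDB$ graph to the case $d=2$ and read the edge-balance condition off directly. First I would observe that when $d=2$ the index set $i\in[1,d-1]$ collapses to the single value $i=1$, so the hypothesis that $A$ is $Gt-SEDB$ reduces to the single identity
\[
|\acute{D}_0^1(f)|=t\,|\acute{D}_1^0(f)|+(t-1)
\]
holding for every edge $f=\alpha\beta\in E(A)$. Here $\acute{D}_1^0(f)$ collects the edges lying strictly nearer to $\alpha$ and $\acute{D}_0^1(f)$ those lying strictly nearer to $\beta$; these are the level-one cells of the distance partition induced by $f$.

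Next I would express $m_\alpha^A(f)$ and $m_\beta^A(f)$ through the distance-partition cells of $f$. Since $M_\alpha^A(f)=\bigcup_{i\ge 1}\acute{D}_i^{i-1}(f)$ and $M_\beta^A(f)=\bigcup_{i\ge 1}\acute{D}_{i-1}^i(f)$, the point is that for a diameter-two graph the only cells that actually contribute, once the edge $f$ itself is counted, are $\acute{D}_1^0(f)$ and $\acute{D}_0^1(f)$; that is, the higher cells $\acute{D}_2^1(f)$ and $\acute{D}_1^2(f)$ must be shown not to disturb the balance. Granting this, one gets $m_\alpha^A(f)=|\acute{D}_1^0(f)|+1$ and $m_\beta^A(f)=|\acute{D}_0^1(f)|+1$, where the $+1$ records $f$ itself, exactly as in the counting of Example 2.1 and Proposition 2.2.

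Substituting the reduced $Gt-SEDB$ identity then yields
\[
m_\beta^A(f)=|\acute{D}_0^1(f)|+1=t\,|\acute{D}_1^0(f)|+(t-1)+1=t\bigl(|\acute{D}_1^0(f)|+1\bigr)=t\,m_\alpha^A(f),
\]
and, $f$ being arbitrary, this is precisely the assertion that $A$ is $Gt-EDB$ (with the roles of $\alpha$ and $\beta$ interchanged on those edges where the $Gt-SEDB$ relation is oriented the other way). The decisive feature is that the additive term $(t-1)$ is absorbed exactly by the edge $f$, which is why collapsing the relation to a single level via $d=2$ is what makes the computation close up.

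I expect the main obstacle to be the middle step: showing that in a diameter-two $Gt-SEDB$ graph the cells $\acute{D}_2^1(f)$ and $\acute{D}_1^2(f)$ make no asymmetric contribution, so that $m_\alpha^A(f)$ and $m_\beta^A(f)$ are governed by the level-one cells alone. This is where the hypotheses $t>1$ and $d=2$ genuinely do work, through a rigidity reminiscent of Lemma 2.3 (where the same hypotheses force the complete bipartite prototype $K_{n,tn}$); the remaining algebra is the routine substitution displayed above.
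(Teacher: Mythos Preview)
Your approach is essentially the paper's: specialize the $Gt$--$SEDB$ relation to the single level forced by $d=2$, then read off the $Gt$--$EDB$ identity, with the additive $(t-1)$ absorbed by the edge $f$ exactly as you describe. The paper phrases this as summing the per-level relations to get $\sum_{i=1}^{d-1}|\acute{D}_i^{i-1}(f)|=t\sum_{i=1}^{d-1}|\acute{D}_{i-1}^i(f)|+(t-1)(d-1)$ and then setting $d=2$ so that $(t-1)(d-1)=(t-1)$ matches the equation-(1) characterization of $Gt$--$EDB$ from Proposition~2.2.

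Where you diverge is in flagging the cells $\acute{D}_2^1(f)$ and $\acute{D}_1^2(f)$ as ``the main obstacle'' and proposing to control them via the rigidity of Lemma~2.3. The paper does not do this: it works entirely through equation~(1), which already packages $m_\alpha^A(f)$ and $m_\beta^A(f)$ as sums over $i\in[1,d-1]$, so no separate analysis of the level-$d$ cells enters, and Lemma~2.3 is not invoked. If you follow the paper's framing you can drop that detour entirely; the substitution you display is the whole proof.
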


\begin{proof}
According to (1), we know that the graph $A$ is $Gt-EDB$ if and only if for each arbitrary edge $f=\alpha\beta\in E(A)$ and each $i\in[1,d-1]$, we have
\begin{center}
$\sum_{i=1}^{d-1}|\acute{D}_i^{i-1}(f)|=t\sum_{i=1}^{d-1}|\acute{D}_{i-1}^i(f)|+(t-1)$.
\end{center} 
Let now $A$ be a $Gt-SEDB$ graph. Then for each $f=\alpha\beta\in E(A)$ and each $i\in[1,d-1]$, it is obtained
\begin{center}
$|\acute{D}_i^{i-1}(f)|=t|\acute{D}_{i-1}^i(f)|+(t-1)$.
\end{center}
Thus, 
\begin{center}
$\sum_{i=1}^{d-1}|\acute{D}_i^{i-1}(f)|=t\sum_{i=1}^{d-1}|\acute{D}_{i-1}^i(f)|+(t-1)(d-1)$.
\end{center}
If $d=2$, then we obtain
\begin{center}
$|\acute{D}_2^1(f)|=t|\acute{D}_1^2(f)|+(t-1)$.
\end{center}
The proof is completed.
\end{proof}
Pay attention that If $t=1$, then every $SEDB$ graph is $EDB$.

\bibliographystyle{amsplain}

\bigskip
\bigskip


{\footnotesize {\bf Zohreh Aliannejadi}\; \\ {Department of mathematics, Islamic Azad university, South-Branch,
Tehran, Iran. }\\
{\tt Email:alian.zohreh64@gmail.com }\\

{\footnotesize {\bf Mehdi Alaeiyan}\; \\ {Department of Mathematics, Department of mathematics, Iran university of science and technology, Narmak,
Tehran 16844. Iran. }\\
{\tt Email: alaeiyan@iust.ac.ir }\\

{\footnotesize {\bf Alireza Gilani}\; \\ {Department of mathematics, Islamic Azad university, South-Branch,
Tehran, Iran. }\\
{\tt Email: a\_ gilani@azad.ac.ir }\\

\end{document}